\newtheorem{theorem}{Theorem}[section]
\newtheorem{corollary}[theorem]{Corollary}
\newtheorem{proposition}[theorem]{Proposition}
\newtheorem{lemma}[theorem]{Lemma}
\newtheorem{conjecture}[theorem]{Conjecture}
\theoremstyle{remark}
\theoremstyle{definition}
\renewcommand\footnotemark{}
\title{Extremal values for the spectral radius of the normalized distance Laplacian}
\date{\today}
\author{Jacob Johnston \and Michael Tait\thanks{Villanova University Department of Mathematics \& Statistics. Both authors were partially supported by National Science Foundation grant DMS-2011553 and the second author was partially supported by a Villanova University Summer Grant. Emails: \texttt{$\{$jjohns80, michael.tait$\}$@villanova.edu}}}
\DeclareMathOperator{\diam}{diam}
\DeclareMathOperator{\OBJ}{OBJ}
\begin{document}

\maketitle
\begin{abstract}
    The normalized distance Laplacian of a graph $G$ is defined as $\mathcal{D}^\mathcal{L}(G)=T(G)^{-1/2}(T(G)-\mathcal{D}(G))T(G)^{-1/2}$ where $\mathcal{D}(G)$ is the matrix with pairwise distances between vertices and $T(G)$ is the diagonal transmission matrix. In this project, we study the minimum and maximum spectral radii associated with this matrix, and the structures of the graphs that achieve these values. In particular, we prove a conjecture of Reinhart that the complete graph is the unique graph with minimum spectral radius, and we give several partial results towards a second conjecture of Reinhart regarding which graph has the maximum spectral radius.
\end{abstract}
\section{Introduction}
\par For a graph $G$, one of the most well-studied matrices to associate to $G$ is the normalized Laplacian matrix which is written as $\mathcal{L}(G)=D(G)^{-1/2}(D(G)-A(G))D(G)^{-1/2}$ where $D(G)$ is the diagonal degree matrix, $(D(G))_{ii}=\deg(v_i)$ and $A(G)$ is the adjacency matrix. This matrix was popularized to graph theorists by Chung and has been the subject of much research in part due to its connection with other areas of math, including differential geometry and Markov chains, see for example the monograph \cite{fan}.

The distance matrix, denoted by $\mathcal{D}(G)$, is defined as
\begin{align*}
    (\mathcal{D}(G))_{i j}=d(v_i,v_j)
\end{align*}
where $d(v_i,v_j)$ is the distance between vertex $v_i$ and vertex $v_j$. This matrix was introduced by Graham and Pollak \cite{GP} and was motivated by routing calls in a telephone network. Eigenvalues of distance matrices of graphs were studied extensively after this paper and have received renewed interest in the last decade or so; see the textbook \cite{metricspacebook} and the surveys \cite{AHsurvey, HR}.

In this paper we study a synthesis of the previous two matrices called the {\em normalized distance Laplacian}. This matrix was introduced and studied systematically by Reinhart \cite{R}. To define the matrix we need to define the transmission of a vertex $v_i$ as the sum of distances from $v_i$ to all other $v_j$, that is $t(v_i)=\sum_{i\neq j}d(v_i,v_j)$. We can then define the transmission matrix as the diagonal matrix
\begin{align*}
    (T(G))_{ii}=t(v_i).
\end{align*}
The normalized distance Laplacian of a connected graph $G$ is then defined as
\[
\mathcal{D}^\mathcal{L}(G):=T(G)^{-1/2}(T(G)-\mathcal{D}(G))T(G)^{-1/2}=I-T(G)^{-1/2}\mathcal{D}(G)T(G)^{-1/2},
\]
and has entries
\begin{align*}
     (\mathcal{D}^\mathcal{L}(G))_{i j}= \begin{cases} 
      1 & i=j \\
      -\frac{d(v_i,v_j)}{\sqrt{t(v_i)t(v_j)}} & i\neq j
   \end{cases}.
\end{align*}
When the graph $G$ is clear from context, we will refer to a matrix $M(G)$ associated with it simply as $M$. We denote the eigenvalues of $\mathcal{D}^\mathcal{L}$ as $\partial_1^\mathcal{L}\leq\dots\leq\partial_n^\mathcal{L}$. Let $\mathbf{x}$ be an eigenvector of $\mathcal{D}^\mathcal{L}$. We define the harmonic eigenvector as $\mathbf{y}=T^{-1/2}\mathbf{x}$. Then, if $\mathbf{x}$ is an eigenvector of $T^{-1/2}(T-\mathcal{D})T^{-1/2}$ with eigenvalue $\partial^\mathcal{L}$, we note that $\mathbf{y}$ is eigenvector of $I-T^{-1}\mathcal{D}(G)$ with eigenvalue $\partial^\mathcal{L}$. It follows that any nonzero eigenvalue $\partial^\mathcal{L}$ has harmonic eigenvector $\mathbf{y}\perp T\mathbf{1}$.

\par In her article, Reinhart \cite{R} proved several results about normalized distance Laplacian matrix. In particular she proved the following theorem. 
\begin{theorem}\label{Reinhart bounds theorem}
For a graph $G$ on $n\geq2$ vertices,
\begin{align*}
    \partial_2^\mathcal{L}\leq\frac{n}{n-1}\text{ and }\partial_n^\mathcal{L}\geq\frac{n}{n-1}.
\end{align*}
\end{theorem}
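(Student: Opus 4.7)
My plan is to reduce both inequalities to a one-line averaging argument built on the trace of $\mathcal{D}^{\mathcal{L}}$. Since every diagonal entry of $\mathcal{D}^{\mathcal{L}}$ equals $1$, the identity $\mathrm{tr}(\mathcal{D}^{\mathcal{L}})=n$ is immediate, and hence $\sum_{i=1}^{n}\partial_{i}^{\mathcal{L}}=n$. Once I know that the smallest eigenvalue is $\partial_{1}^{\mathcal{L}}=0$, the remaining $n-1$ eigenvalues must also sum to $n$, and their arithmetic mean is exactly $n/(n-1)$. The minimum of $n-1$ numbers is at most their mean and the maximum is at least their mean, so these observations yield $\partial_{2}^{\mathcal{L}}\le n/(n-1)\le \partial_{n}^{\mathcal{L}}$ in one stroke.

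To justify $\partial_{1}^{\mathcal{L}}=0$, I would first exhibit $T^{1/2}\mathbf{1}$ as an eigenvector for $0$: the identity $\mathcal{D}\mathbf{1}=T\mathbf{1}$ (which just restates the definition of the transmissions) gives
\[
\mathcal{D}^{\mathcal{L}}T^{1/2}\mathbf{1}=T^{-1/2}(T-\mathcal{D})\mathbf{1}=T^{-1/2}(T\mathbf{1}-T\mathbf{1})=0.
\]
To see that $0$ is the \emph{smallest} eigenvalue, and not merely some eigenvalue, I would verify that $\mathcal{D}^{\mathcal{L}}$ is positive semidefinite. Since $\mathcal{D}^{\mathcal{L}}=T^{-1/2}(T-\mathcal{D})T^{-1/2}$ is a congruence of $T-\mathcal{D}$, it is enough to show $T-\mathcal{D}\succeq 0$; expanding the quadratic form and using $t(v_{i})=\sum_{j\ne i}d(v_{i},v_{j})$ to rewrite $\mathbf{x}^{T}T\mathbf{x}$ as $\tfrac{1}{2}\sum_{i\ne j}d(v_{i},v_{j})(x_{i}^{2}+x_{j}^{2})$, one obtains
\[
\mathbf{x}^{T}(T-\mathcal{D})\mathbf{x}=\tfrac{1}{2}\sum_{i\ne j}d(v_{i},v_{j})(x_{i}-x_{j})^{2}\ge 0.
\]

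There is no serious obstacle here: the argument is essentially a trace identity plus the elementary fact that the smallest and largest of $n-1$ real numbers sandwich their mean. The only substantive input is positive semidefiniteness, and even that may be regarded as background (the excerpt already treats $0$ as an eigenvalue with eigenvector proportional to $T\mathbf{1}$ in the harmonic picture). With that granted, the entire proof reduces to the single observation $\mathrm{tr}(\mathcal{D}^{\mathcal{L}})=n$ and an averaging step.
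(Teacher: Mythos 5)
Your proposal is correct, and it is essentially the argument the paper itself relies on: the sum-of-squares form of the Rayleigh quotient (Proposition \ref{sum of squares}) gives positive semidefiniteness with $T^{1/2}\mathbf{1}$ in the kernel, and the trace identity $\mathrm{tr}(\mathcal{D}^{\mathcal{L}})=n$ together with the averaging step is exactly the inequality $n=\mathrm{trace}(\mathcal{D}^\mathcal{L})\leq\sum_{i=2}^n\partial_i^\mathcal{L}\leq(n-1)\partial_n^\mathcal{L}$ used in the proof of Conjecture \ref{Reinhart complete}. No gaps; your justification of $\partial_1^\mathcal{L}=0$ via the congruence with $T-\mathcal{D}$ and the identity $\mathcal{D}\mathbf{1}=T\mathbf{1}$ is sound.
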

She then proposed the following two conjectures. The graph $KPK_{n_1, n_2, n_3}$ is obtained by connecting two cliques on $n_1$ and $n_3$ vertices together via a path on $n_2$ vertices.
\begin{conjecture}
\label{Reinhart complete}
    For a graph on n vertices,
\begin{align*}
    \partial_n^\mathcal{L}=\frac{n}{n-1}
\end{align*}
if and only if $G$ is the complete graph $K_n$.
\end{conjecture}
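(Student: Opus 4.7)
The plan is to use Theorem~\ref{Reinhart bounds theorem} together with the trace identity $\operatorname{tr}(\mathcal{D}^\mathcal{L})=n$ to force the full spectrum of $\mathcal{D}^\mathcal{L}$ under the equality hypothesis, and then recover $G$ directly from that spectrum via the spectral theorem. The easy direction is a one-line computation: for $K_n$ we have $\mathcal{D}(K_n)=J-I$ and $T(K_n)=(n-1)I$, giving $\mathcal{D}^\mathcal{L}(K_n)=\tfrac{n}{n-1}\!\left(I-\tfrac{1}{n}J\right)$ with spectrum $\{0,\tfrac{n}{n-1},\ldots,\tfrac{n}{n-1}\}$, so $\partial_n^\mathcal{L}(K_n)=\tfrac{n}{n-1}$.

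For the converse, assume $\partial_n^\mathcal{L}=\tfrac{n}{n-1}$. Since every diagonal entry of $\mathcal{D}^\mathcal{L}$ equals $1$, the eigenvalues sum to $n$. Because $G$ is connected, the Laplacian-style identity $y^T(T-\mathcal{D})y=\tfrac12\sum_{i\ne j}d(v_i,v_j)(y_i-y_j)^2$ shows $T-\mathcal{D}$ is positive semidefinite with kernel exactly $\operatorname{span}(\mathbf{1})$, so $\mathcal{D}^\mathcal{L}$ is positive semidefinite with $0$-eigenspace exactly $\operatorname{span}(T^{1/2}\mathbf{1})$. Hence $\partial_1^\mathcal{L}=0<\partial_2^\mathcal{L}$, and the remaining $n-1$ eigenvalues are positive, sum to $n$, and by Theorem~\ref{Reinhart bounds theorem} are each at most $\tfrac{n}{n-1}$. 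This forces $\partial_2^\mathcal{L}=\cdots=\partial_n^\mathcal{L}=\tfrac{n}{n-1}$.

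With only two distinct eigenvalues $0$ and $\lambda:=\tfrac{n}{n-1}$, the spectral theorem yields
\[
\mathcal{D}^\mathcal{L}=\lambda\!\left(I-\frac{T^{1/2}\mathbf{1}\mathbf{1}^T T^{1/2}}{\mathbf{1}^T T\mathbf{1}}\right).
\]
Equating the diagonal entries to $1$ forces $t(v_i)=\tfrac{1}{n}\mathbf{1}^T T\mathbf{1}$ for every $i$, so every vertex has the same transmission; equating the off-diagonal entries to $-d(v_i,v_j)/\sqrt{t(v_i)t(v_j)}$ then forces $d(v_i,v_j)$ to equal a single common value for all $i\ne j$. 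Since $G$ is a connected graph on $n\ge 2$ vertices, at least one pair is adjacent, so this common distance must be $1$, and therefore $G\cong K_n$.

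The main subtlety is showing that the $0$-eigenspace is exactly one-dimensional, since without simplicity of $0$ the trace argument cannot pin down the multiplicity of $\tfrac{n}{n-1}$; this is handled cleanly by the quadratic-form representation of $T-\mathcal{D}$ together with connectedness of $G$. Once that is in hand, the reconstruction of $\mathcal{D}$ from its two-eigenvalue spectrum is essentially automatic, and the characterization $G=K_n$ drops out by comparing entries.
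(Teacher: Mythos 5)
Your proof is correct, and it shares the paper's key first step: using $\operatorname{tr}(\mathcal{D}^\mathcal{L})=n$ together with $\partial_1^\mathcal{L}=0$ and $\partial_i^\mathcal{L}\le\partial_n^\mathcal{L}=\tfrac{n}{n-1}$ to force $\partial_2^\mathcal{L}=\cdots=\partial_n^\mathcal{L}=\tfrac{n}{n-1}$, so that the $\tfrac{n}{n-1}$-eigenspace is exactly $(T^{1/2}\mathbf{1})^{\perp}$. Where you diverge is the finish. The paper plugs specific test vectors (supported on two coordinates, chosen orthogonal to $T^{1/2}\mathbf{1}$) into the eigenvector equation, reads off $d(v_1,v_j)/t(v_1)=\tfrac{1}{n-1}$ row by row, and then needs an extra argument involving the vertex of maximum transmission to pin down $t(v_1)=n-1$. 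You instead write the full spectral decomposition $\mathcal{D}^\mathcal{L}=\lambda\bigl(I-\tfrac{T^{1/2}\mathbf{1}\mathbf{1}^{T}T^{1/2}}{\mathbf{1}^{T}T\mathbf{1}}\bigr)$ and compare entries: the diagonal forces all transmissions equal in one stroke, and the off-diagonal then forces all distances equal, with connectedness giving the common value $1$. This is cleaner and avoids the maximum-transmission bookkeeping entirely. One small remark: your separate argument that the $0$-eigenspace is one-dimensional (via the quadratic form $y^{T}(T-\mathcal{D})y=\tfrac12\sum_{i\ne j}d(v_i,v_j)(y_i-y_j)^2$ and connectedness) is valid but not strictly necessary as an independent step, since once the trace argument yields $\partial_2^\mathcal{L}=\tfrac{n}{n-1}>0$ the simplicity of $0$ follows; what you do genuinely need from that computation is the identification of the kernel as $\operatorname{span}(T^{1/2}\mathbf{1})$, which you use to write the projection explicitly, and which the paper also uses implicitly.
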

\begin{conjecture}
\label{conj2}
    The maximum $\mathcal{D}^\mathcal{L}$ spectral radius achieved by a graph on $n$ vertices tends to 2 as $n\to\infty$ and is achieved by $KPK_{n_1,n_2,n_3}$ for some $n_1+n_2+n_3=n+2$.
\end{conjecture}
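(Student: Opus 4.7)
The plan is to decompose Conjecture~\ref{conj2} into three subproblems: (a) the universal upper bound $\partial_n^\mathcal{L}(G) < 2$ for every connected graph $G$; (b) an explicit demonstration that $\partial_n^\mathcal{L}(KPK_{n_1, n_2, n_3}) \to 2$ for an appropriate sequence of parameters; and (c) a structural proof that the $KPK$ family actually contains the maximizer for each $n$. The first two should be tractable; (c) is the main obstacle.

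For (a), I would work with $\mathcal{D}^\mathcal{L} = I - T^{-1/2}\mathcal{D}T^{-1/2}$, so that proving $\partial_n^\mathcal{L} < 2$ reduces to bounding the smallest eigenvalue of the similar matrix $T^{-1}\mathcal{D}$ strictly below by $-1$. Since $T^{-1}\mathcal{D}$ has nonnegative entries, zero diagonal, and all row sums equal to $1$, it behaves like a stochastic matrix; one can mimic the classical argument that a stochastic matrix has spectrum in the closed unit disk, then exploit the fact that every off-diagonal entry is strictly positive (as $G$ is connected) to upgrade the bound to strict inequality. For (b), I would fix $n_1 = n_3$ and a small $n_2$ (perhaps constant, or at most slowly growing in $n$) and take as a test vector the harmonic eigenvector candidate $\mathbf{y}$ that is $+c$ on one clique, $-c$ on the other, and interpolates antisymmetrically across the path; setting $\mathbf{x} = T^{1/2}\mathbf{y}$ gives a vector automatically orthogonal to $T^{1/2}\mathbf{1}$ (so it detects a nonzero eigenvalue). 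In the quadratic form $\mathbf{x}^\top \mathcal{D}^\mathcal{L} \mathbf{x}$, the cross-clique contributions $-d(u,v)/\sqrt{t(u)t(v)}$ become positive because of the opposite signs, and because pairs across the two cliques are $\Theta(n^2)$ in number while each distance is $\Theta(n_2)$, these terms should dominate the numerator and yield a Rayleigh quotient tending to $2$.

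The hard part is (c). The canonical approach---perturbing a hypothetical extremizer by adding or deleting an edge---is delicate here because a single edge change simultaneously alters many entries of $\mathcal{D}(G)$ and every entry of $T(G)$, so the sign of $\Delta \partial_n^\mathcal{L}$ is difficult to track. A realistic first step, and the one I would pursue as a partial result, is a diameter-sensitive upper bound of the form $\partial_n^\mathcal{L}(G) \leq 2 - c(\diam(G), n)$, which would restrict candidate extremizers to graphs of diameter $\Omega(n)$. A classification of near-extremal graphs of large diameter---showing via eigenvector localization that the top eigenvector must concentrate on two ``opposite'' ends connected by a long geodesic, and that those ends must be cliques (else a local edge-addition strictly increases the Rayleigh quotient)---would then narrow the search to the $KPK$ family, after which the optimal triple $(n_1, n_2, n_3)$ can be pinned down by direct computation on a one-parameter family.
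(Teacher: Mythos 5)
This statement is a conjecture that the paper itself does not resolve; it only establishes the first half (the maximum tends to $2$) together with partial structural information, so your honest admission that part (c) is out of reach matches the state of the art. Your parts (a) and (c) are broadly reasonable in outline: $T^{-1}\mathcal{D}$ is indeed row-stochastic with strictly positive off-diagonal entries, and the paper's actual route to the upper bound is a quantitative version of your idea, namely $\partial_n^\mathcal{L}\leq 2-c/\diam(G)$ proved via the Rayleigh-quotient identity $\partial_n^\mathcal{L}=2-\min_{\mathbf{y}\neq \mathbf{0}}\frac{\sum d(v_i,v_j)(\mathbf{y}_i+\mathbf{y}_j)^2}{\sum \mathbf{y}_i^2 t(v_i)}$ and an elementary lower bound $\sum_{i\neq j}(\mathbf{z}_i+\mathbf{z}_j)^2=\Omega(n)$ for unit vectors.

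There are, however, two concrete errors. First, in (b) your parameter choice is wrong: with $n_2$ constant the Rayleigh quotient of your test vector does \emph{not} tend to $2$. Taking $n_1=n_3=k$ and path length $p$, a clique vertex has transmission $\Theta(k+kp+p^2)$, so the quotient $\frac{\sum d(v_i,v_j)(\mathbf{y}_i+\mathbf{y}_j)^2}{\sum \mathbf{y}_i^2 t(v_i)}$ is of order $\frac{k^2+kp^2}{k^2p}=\frac{1}{p}+\frac{p}{k}$, which is bounded away from $0$ for constant $p$ (e.g.\ the cross-clique contribution alone gives roughly $\frac{2p}{1+p}<2$). You need $p\to\infty$ with $p=o(n)$ just to get the limit $2$, and the optimal trade-off is $p=\Theta(\sqrt{n})$, which is exactly the paper's choice ($n_2=\sqrt{2n}$, yielding $\partial_n^\mathcal{L}\geq 2-c/\sqrt{n}$). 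Second, your claim that a diameter-sensitive upper bound "would restrict candidate extremizers to graphs of diameter $\Omega(n)$" is inconsistent with the conjecture itself: the conjectured extremal barbell has diameter $\Theta(\sqrt{n})$, and combining the bound $\partial_n^\mathcal{L}\leq 2-c/\diam(G)$ with the barbell lower bound only forces $\diam(G)=\Omega(\sqrt{n})$. If your program genuinely forced diameter $\Omega(n)$ it would disprove, not prove, the conjectured extremal family.
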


In this paper we answer Conjecture \ref{Reinhart complete} affirmatively and give several partial results towards Conjecture \ref{conj2}, including verifying the first part of the statement (Proposition \ref{barbell example}). After completion of this manuscript, we learned that a proof of Conjecture \ref{Reinhart complete} was very recently published in \cite{GRD}, and so our paper gives an alternate proof of the result.  In Section 2, we discuss preliminary work that will lead to the formation of later optimization problems. In Section 3, we prove Conjecture {\ref{Reinhart complete}}. Finally, in Section 4, we show partial results towards Conjecture {\ref{conj2}} via analyzing the optimization problems.

For functions $f,g: \mathbb{N} \to \mathbb{R}_{\geq 0}$ we say that $f = O(g)$ if $\limsup \frac{f}{g} < \infty$, that $f = \Omega(g)$ if $g = O(f)$, and that $f = \Theta(g)$ if both $f = O(g)$ and $f = \Omega(g)$.
\section{Preliminaries}
In this section, we look at the Rayleigh quotient of $\mathcal{D}^\mathcal{L}$ to characterize its eigenvalues as a sum of squares over each pair of vertices.
\begin{proposition}
\label{sum of squares}
Let $\mathbf{x}$ be a nonzero vector with harmonic vector $\mathbf{y}=T^{-1/2}\mathbf{x}$ and let $\mathcal{D}^\mathcal{L}$ be the normalized distance Laplacian matrix of a graph $G$. Then 
\[
\frac{\mathbf{x}^T \mathcal{D}^\mathcal{L} \mathbf{x}}{\mathbf{x}^T\mathbf{x}} = \frac{\sum_{i\not= j} d(v_i,v_j)(\mathbf{y}_i-\mathbf{y}_j)^2 }{\sum_{i=1}^n \mathbf{y}_i^2t(v_i)}.
\]
\begin{proof}
\begin{align*}
\frac{\mathbf{x}^T \mathcal{D}^\mathcal{L} \mathbf{x}}{\mathbf{x}^T\mathbf{x}}&=\frac{\mathbf{x}^T (T^{-1/2}\mathcal{D}^LT^{-1/2}) \mathbf{x}}{\mathbf{x}^T\mathbf{x}}\\
&=\frac{\mathbf{y}^T\mathcal{D}^L\mathbf{y}}{\mathbf{y}^T T^{1/2}T^{1/2}\mathbf{y}}\\
&=\frac{\mathbf{y}^T(T-\mathcal{D})\mathbf{y}}{\sum_{i=1}^n\mathbf{y}_i^2 t(v_i)}\\
&=\frac{\sum_{i,j}\mathbf{y}_j(T_{i j}-d(v_i,v_j))\mathbf{y}_i}{\sum_{i=1}^n\mathbf{y}_i^2 t(v_i)}\\
&=\frac{\sum_{i=j}\mathbf{y}_i^2 t(v_i)-2\sum_{i< j}\mathbf{y}_j d(v_i,v_j)\mathbf{y}_i}{\sum_{i=1}^n\mathbf{y}_i^2 t(v_i)} 
\end{align*}
Then for the first sum in the numerator, we have the term $\mathbf{y}_i^2$ $t(v_i)$ times for each vertex $i$. 
Therefore, we can write $\sum_{i=j}\mathbf{y}_i^2 t(v_i)$ as $\sum_{i\neq j}d(v_i,v_j)(\mathbf{y}_i^2+\mathbf{y}_j^2)$. Hence we have
\begin{align*}
\frac{\mathbf{x}^T \mathcal{D}^\mathcal{L} \mathbf{x}}{\mathbf{x}^T\mathbf{x}}&=\frac{\sum_{i\neq j}d(v_i,v_j)(\mathbf{y}_i^2+\mathbf{y}_j^2)-2\sum_{i\neq j}d(v_i,v_j)\mathbf{y}_i\mathbf{y}_j}{\sum_{i=1}^n\mathbf{y}_i^2 t(v_i)}\\
&=\frac{\sum_{i\neq j}d(v_i,v_j)(\mathbf{y}_i-\mathbf{y}_j)^2}{\sum_{i=1}^n\mathbf{y}_i^2 t(v_i)}
\end{align*}
\end{proof}
\end{proposition}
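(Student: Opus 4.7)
The plan is to use the factorization $\mathcal{D}^\mathcal{L} = T^{-1/2}(T-\mathcal{D})T^{-1/2}$ to reduce the Rayleigh quotient of $\mathcal{D}^\mathcal{L}$ to a quadratic form in the harmonic vector $\mathbf{y}$. Writing $\mathbf{x} = T^{1/2}\mathbf{y}$, the $T^{1/2}$ factors on either side cancel the $T^{-1/2}$ appearing in $\mathcal{D}^\mathcal{L}$, so the numerator collapses to $\mathbf{x}^T \mathcal{D}^\mathcal{L} \mathbf{x} = \mathbf{y}^T(T-\mathcal{D})\mathbf{y}$, while the denominator becomes $\mathbf{x}^T\mathbf{x} = \mathbf{y}^T T \mathbf{y} = \sum_i \mathbf{y}_i^2 t(v_i)$, which already matches the denominator on the right-hand side. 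Thus the proposition reduces to proving the quadratic-form identity $\mathbf{y}^T(T-\mathcal{D})\mathbf{y} = \sum_{i\neq j} d(v_i,v_j)(\mathbf{y}_i-\mathbf{y}_j)^2$ (with unordered pairs), which is the familiar fact that $T-\mathcal{D}$ is the combinatorial Laplacian of the weighted complete graph on $V(G)$ whose edge weights are the pairwise distances.

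To prove this identity I would expand the two pieces separately and recombine. For the diagonal piece, $\mathbf{y}^T T \mathbf{y} = \sum_i \mathbf{y}_i^2 t(v_i)$; substituting the definition $t(v_i) = \sum_{j\neq i} d(v_i,v_j)$ and pairing the contribution of $\{i,j\}$ with $\{j,i\}$ via the symmetry $d(v_i,v_j)=d(v_j,v_i)$ rewrites this as an unordered pair sum $\sum_{i<j} d(v_i,v_j)(\mathbf{y}_i^2+\mathbf{y}_j^2)$. For the off-diagonal piece, $\mathbf{y}^T \mathcal{D}\mathbf{y} = \sum_{i,j} d(v_i,v_j)\mathbf{y}_i\mathbf{y}_j = 2\sum_{i<j} d(v_i,v_j)\mathbf{y}_i\mathbf{y}_j$ by the same symmetry. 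Subtracting and distributing the square gives $\sum_{i<j} d(v_i,v_j)(\mathbf{y}_i-\mathbf{y}_j)^2$, which is the numerator on the right-hand side.

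I do not foresee a real obstacle; the whole argument is a direct algebraic manipulation and is essentially the standard identity $\mathbf{x}^T L \mathbf{x} = \sum_{ij \in E} w_{ij}(\mathbf{x}_i-\mathbf{x}_j)^2$ for a weighted Laplacian. The only mild care required is bookkeeping for the factors of $2$ that appear when passing between ordered sums $\sum_{i\neq j}$ and unordered sums $\sum_{i<j}$: one must check that the $2$ coming from the off-diagonal of $\mathcal{D}$ and the $2$ implicit in $(y_i-y_j)^2 = y_i^2 - 2y_iy_j + y_j^2$ line up consistently with the symmetrization performed on the $T$ part.
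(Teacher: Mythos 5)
Your proposal is correct and follows essentially the same route as the paper: factor out $T^{\pm 1/2}$ to reduce to $\mathbf{y}^T(T-\mathcal{D})\mathbf{y}/\mathbf{y}^T T\mathbf{y}$, rewrite $\sum_i \mathbf{y}_i^2 t(v_i)$ as a pair sum using $t(v_i)=\sum_{j\neq i}d(v_i,v_j)$, and complete the square. Your careful bookkeeping lands on the unordered-pair sum $\sum_{i<j}$, which is what the paper's $\sum_{i\neq j}$ is implicitly intended to denote (its own expansion of $\sum_i \mathbf{y}_i^2 t(v_i)$ only balances under that reading), so the two agree.
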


We note that since the result from Proposition {\ref{sum of squares}} is a sum of squares, we have that any eigenvalue $\partial^\mathcal{L}$ satisfies $\partial^{\mathcal{L}}\geq 0$ with equality holding if and only if all terms are zero, i.e. when $\mathbf{y}=\mathbf{1}$. As $\mathcal{D}^\mathcal{L}$ is a real, symmetric matrix, it admits an orthonormal basis of eigenvectors. Since the vector $T^{1/2}\mathbf{1}$ is an eigenvector corresponding to eigenvalue $0$, the min-max theorem yields the following.
\begin{corollary}
\label{min max}
The second smallest and largest eigenvalues of $\mathcal{D}^\mathcal{L}$ satisfy
\begin{align*}\partial^\mathcal{L}_2&=\min\limits_{\mathbf{y}\perp T\mathbf{1}}\frac{\sum_{i\neq j}d(v_i,v_j)(\mathbf{y}_i-\mathbf{y}_j)^2}{\sum_{i=1}^n\mathbf{y}_i^2 t(v_i)}\\
\partial^\mathcal{L}_n&=\max\limits_{\mathbf{y}\perp T\mathbf{1}}\frac{\sum_{i\neq j}d(v_i,v_j)(\mathbf{y}_i-\mathbf{y}_j)^2}{\sum_{i=1}^n\mathbf{y}_i^2 t(v_i)} = \max\limits_{\mathbf{y}\not=\mathbf{0}}\frac{\sum_{i\neq j}d(v_i,v_j)(\mathbf{y}_i-\mathbf{y}_j)^2}{\sum_{i=1}^n\mathbf{y}_i^2 t(v_i)}\end{align*}
\end{corollary}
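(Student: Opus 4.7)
The plan is to combine the Rayleigh quotient identity from Proposition \ref{sum of squares} with the Courant--Fischer min-max characterization of eigenvalues of a real symmetric matrix, via the change of variables $\mathbf{x}=T^{1/2}\mathbf{y}$. Since $\mathcal{D}^\mathcal{L}$ is real and symmetric, it admits an orthonormal eigenbasis, and the discussion preceding the corollary already shows that $T^{1/2}\mathbf{1}$ is an eigenvector with eigenvalue $0$ and that the Rayleigh quotient is nonnegative on every vector. Hence $\partial_1^\mathcal{L}=0$ and $T^{1/2}\mathbf{1}$ lies in its eigenspace.

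First I would apply the standard min-max theorem to $\mathcal{D}^\mathcal{L}$ to obtain
\[
\partial_2^\mathcal{L} = \min_{\substack{\mathbf{x}\neq \mathbf{0}\\ \mathbf{x}\perp T^{1/2}\mathbf{1}}} \frac{\mathbf{x}^T \mathcal{D}^\mathcal{L} \mathbf{x}}{\mathbf{x}^T\mathbf{x}}, \qquad \partial_n^\mathcal{L} = \max_{\mathbf{x}\neq \mathbf{0}}\frac{\mathbf{x}^T \mathcal{D}^\mathcal{L}\mathbf{x}}{\mathbf{x}^T \mathbf{x}}.
\]
Then I would substitute $\mathbf{x}=T^{1/2}\mathbf{y}$. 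Because $T$ is a positive diagonal matrix, $T^{1/2}$ is a bijection on $\mathbb{R}^n$, so $\mathbf{x}$ ranges over all nonzero vectors exactly when $\mathbf{y}$ does. The orthogonality condition transforms as $\mathbf{x}^T T^{1/2}\mathbf{1} = \mathbf{y}^T T^{1/2}T^{1/2}\mathbf{1} = \mathbf{y}^T T\mathbf{1}$, so $\mathbf{x}\perp T^{1/2}\mathbf{1}$ is equivalent to $\mathbf{y}\perp T\mathbf{1}$. Invoking Proposition \ref{sum of squares} rewrites the Rayleigh quotient as the desired sum-of-squares expression, yielding the two displayed formulas.

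For the final equality $\partial_n^\mathcal{L}=\max_{\mathbf{y}\neq \mathbf{0}}(\cdots)$, the unconstrained max on the $\mathbf{x}$ side is already $\partial_n^\mathcal{L}$ by Courant--Fischer, so pushing through the bijection gives the unconstrained max on the $\mathbf{y}$ side. Equivalently, by Theorem \ref{Reinhart bounds theorem} we have $\partial_n^\mathcal{L}\geq n/(n-1)>0$, strictly larger than the Rayleigh quotient at $\mathbf{y}=\mathbf{1}$ (which equals $0$), so dropping the $\mathbf{y}\perp T\mathbf{1}$ constraint cannot strictly increase the maximum.

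There is no substantive obstacle here; the argument is bookkeeping on top of min-max. The one subtlety worth being explicit about is that the change of variables $\mathbf{x}=T^{1/2}\mathbf{y}$ applies $T^{1/2}$ on \emph{both} sides of the inner product, which is exactly why the harmonic orthogonality condition has the weight $T$ (reading $\mathbf{y}\perp T\mathbf{1}$) rather than merely $\mathbf{y}\perp\mathbf{1}$.
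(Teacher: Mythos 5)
Your proof is correct and follows essentially the same route as the paper: the authors likewise note that $\mathcal{D}^\mathcal{L}$ is real symmetric with $T^{1/2}\mathbf{1}$ an eigenvector for the eigenvalue $0$, and then invoke the min-max theorem together with Proposition \ref{sum of squares}. You simply spell out the change of variables $\mathbf{x}=T^{1/2}\mathbf{y}$ and the transformation of the orthogonality condition in more detail than the paper does.
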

Because the eigenvectors are orthogonal, the harmonic eigenvector $\mathbf{y}$ that maximizes $\partial^\mathcal{L}$ automatically satisfies $\sum_i^n\mathbf{y}_i t(v_i)=0$. This allows us to ignore the condition that $\mathbf{y} \perp T\mathbf{1}$ when analyzing optimization problems in Section \ref{optimization section}. Next we write the Rayleigh quotient in a form that is more convenient to work with.
\begin{corollary}
\label{min problem}
Let $\mathbf{y}$ be the harmonic eigenvector of $\mathcal{D}^\mathcal{L}$ corresponding to eigenvalue $\partial^\mathcal{L}_n$. Then $$\partial^\mathcal{L}_n=2-\frac{\sum_{i\neq j}d(v_i,v_j)(\mathbf{y}_i+\mathbf{y}_j)^2}{\sum_i\mathbf{y}_i^2t(v_i)}.$$
\begin{proof}
    From the inequality of arithmetic and geometric means, we have $(\mathbf{y}_i-\mathbf{y}_j)^2\leq 2(\mathbf{y}_i^2+\mathbf{y}_j^2)$, which implies that
     \[
    \frac{\sum_{i\not= j} d(v_i,v_j)(\mathbf{y}_i-\mathbf{y}_j)^2 }{\sum_{i=1}^n \mathbf{y}_i^2t(v_i)}\leq\frac{\sum_{i\not= j} d(v_i,v_j)(2\mathbf{y}_i^2+2\mathbf{y}_j^2)}{\sum_{i=1}^n \mathbf{y}_i^2t(v_i)}= 2.\]
    
    Then,
    \begin{align*}
        \partial^\mathcal{L}_n&\leq 2-\frac{\sum_{i\neq j}d(v_i,v_j)(2\mathbf{y}_i^2+2\mathbf{y}_j^2-(\mathbf{y}_i-\mathbf{y}_j)^2)}{\sum_{i=1}^n \mathbf{y}_i^2t(v_i)}\\
        &=2-\frac{\sum_{i\neq j}d(v_i,v_j)(\mathbf{y}_i+\mathbf{y}_j)^2}{\sum_{i=1}^n \mathbf{y}_i^2t(v_i)}.
    \end{align*}
\end{proof}
\end{corollary}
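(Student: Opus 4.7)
The plan is to start from the Rayleigh-quotient characterization of $\partial_n^\mathcal{L}$ given by Proposition \ref{sum of squares}, specialized to the harmonic eigenvector $\mathbf{y}$ realizing the maximum, and then perform a purely algebraic rewriting. The key pointwise identity is
\[
(\mathbf{y}_i-\mathbf{y}_j)^2 \;=\; 2(\mathbf{y}_i^2+\mathbf{y}_j^2) \;-\; (\mathbf{y}_i+\mathbf{y}_j)^2,
\]
which cleanly splits the numerator of the Rayleigh quotient into a diagonal-looking piece depending only on $\mathbf{y}_i^2+\mathbf{y}_j^2$ together with the desired $(\mathbf{y}_i+\mathbf{y}_j)^2$ piece, appearing with a negative sign.

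The second ingredient, already implicit in the proof of Proposition \ref{sum of squares}, is the collapsing identity
\[
\sum_{i\neq j} d(v_i,v_j)(\mathbf{y}_i^2+\mathbf{y}_j^2) \;=\; \sum_{i=1}^n \mathbf{y}_i^2\, t(v_i),
\]
obtained by grouping by the vertex $i$ and using $\sum_{j\neq i} d(v_i,v_j) = t(v_i)$, with the convention inherited from Proposition \ref{sum of squares} that $\sum_{i\neq j}$ ranges over unordered pairs. With this in hand, the numerator of the Rayleigh quotient for $\mathbf{y}$ becomes
\[
\sum_{i\neq j} d(v_i,v_j)(\mathbf{y}_i-\mathbf{y}_j)^2 \;=\; 2\sum_{i} \mathbf{y}_i^2 t(v_i) \;-\; \sum_{i\neq j} d(v_i,v_j)(\mathbf{y}_i+\mathbf{y}_j)^2.
\]

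Dividing through by $\sum_i \mathbf{y}_i^2 t(v_i)$ and invoking Corollary \ref{min max}, so that the left-hand side equals $\partial_n^\mathcal{L}$, yields the claimed formula. There is no substantive obstacle here: the statement is a one-line algebraic rewrite of Proposition \ref{sum of squares}, and the only point requiring care is tracking the indexing convention so that the sum-over-pairs collapses to $\sum_i \mathbf{y}_i^2 t(v_i)$ rather than $2\sum_i \mathbf{y}_i^2 t(v_i)$. Equivalently, one can first observe via the inequality $(a-b)^2 \le 2(a^2+b^2)$ that the Rayleigh quotient is at most $2$, and then identify the slack $2 - \partial_n^\mathcal{L}$ with the displayed ratio; this is the route the authors appear to take.
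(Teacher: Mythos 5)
Your proposal is correct and follows essentially the same route as the paper: both rest on the pointwise identity $(\mathbf{y}_i-\mathbf{y}_j)^2 = 2(\mathbf{y}_i^2+\mathbf{y}_j^2) - (\mathbf{y}_i+\mathbf{y}_j)^2$ together with the collapsing identity $\sum_{i\neq j} d(v_i,v_j)(\mathbf{y}_i^2+\mathbf{y}_j^2) = \sum_i \mathbf{y}_i^2 t(v_i)$ from Proposition \ref{sum of squares}. If anything, your formulation as a direct algebraic identity is slightly cleaner, since the paper's write-up phrases the final steps as inequalities even though the claimed conclusion is an equality.
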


Note that Corollaries {\ref{min max}} and {\ref{min problem}} yield problems with equivalent solutions. If $\mathbf{y}$ gives the global minimum $z_0$ to the optimization $\min \frac{\sum_{i\neq j}d(v_i,v_j)(\mathbf{y}_i+\mathbf{y}_j)^2}{\sum_{i=1}^n \mathbf{y}_i^2t(v_i)}$, then $\mathbf{y}$ also gives the global maximum $2-z_0$ to the optimization $\max \frac{\sum_{i\neq j}d(v_i,v_j)(\mathbf{y}_i-\mathbf{y}_j)^2}{\sum_{i=1}^n\mathbf{y}_i^2 t(v_i)}$ over all nonzero $\mathbf{y}$. 

\section{Proof of Conjecture \ref{Reinhart complete}}
We use the sum of squares characterization of the eigenvalues to prove Conjecture \ref{Reinhart complete}.
\begin{theorem}Let $G$ be an $n$-vertex graph with $n\geq 2$. Then
$\partial^\mathcal{L}_2=\frac{n}{n-1}$ and $\partial^\mathcal{L}_n=\frac{n}{n-1}$ if and only if $G$ is the complete graph.
\end{theorem}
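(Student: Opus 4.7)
For the ``if'' direction I would just check $K_n$: every pairwise distance equals $1$ and every transmission equals $n-1$, hence $\mathcal{D}^{\mathcal{L}}(K_n)=\frac{n}{n-1}I-\frac{1}{n-1}J$, whose eigenvalues are $0$ (with eigenvector $\mathbf{1}$) and $\frac{n}{n-1}$ with multiplicity $n-1$. So both $\partial_2^{\mathcal{L}}$ and $\partial_n^{\mathcal{L}}$ equal $\frac{n}{n-1}$. For the converse the strategy is to first prove the key reduction that $\partial_n^{\mathcal{L}}=\frac{n}{n-1}$ alone forces $G=K_n$, and then reduce the statement about $\partial_2^{\mathcal{L}}$ to this case via the trace.

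To prove the reduction I would test the Rayleigh-quotient characterization from Corollary \ref{min max} with the family of vectors $\mathbf{y}=\mathbf{e}_i-\mathbf{e}_j$. A direct calculation shows the quotient equals $1+\frac{2d(v_i,v_j)}{t(v_i)+t(v_j)}$, so the upper bound $\frac{n}{n-1}$ on $\partial_n^{\mathcal{L}}$ rearranges to
\[
t(v_i)+t(v_j)\;\geq\;2(n-1)\,d(v_i,v_j)\qquad\text{for every pair }i\neq j.
\]
Summing these inequalities over all unordered pairs is tight on both sides: the left-hand side equals $(n-1)\sum_k t(v_k)=2(n-1)W(G)$ where $W(G):=\sum_{i<j}d(v_i,v_j)$, and the right-hand side is also $2(n-1)W(G)$. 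The global equality forces equality in each individual inequality, so $t(v_i)+t(v_j)=2(n-1)d(v_i,v_j)$ for all $i\neq j$. Specializing to any edge $\{i,j\}$ (one exists because $G$ is connected with $n\geq 2$) gives $t(v_i)+t(v_j)=2(n-1)$; combined with the trivial bound $t(v_k)\geq n-1$, this forces $t(v_i)=t(v_j)=n-1$, so both endpoints are universal vertices. Reapplying the pairwise equality to the edge $\{w,v_i\}$ for any vertex $w$ shows $w$ is universal too, and hence $G=K_n$.

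Finally, the condition $\partial_2^{\mathcal{L}}=\frac{n}{n-1}$ reduces to the previous case via $\operatorname{tr}(\mathcal{D}^{\mathcal{L}})=n$: the spectrum is $\{0\}$ together with $n-1$ nonzero values each at least $\frac{n}{n-1}$, and their sum being $n$ forces them all to equal $\frac{n}{n-1}$, so in particular $\partial_n^{\mathcal{L}}=\frac{n}{n-1}$. I anticipate the main obstacle being the identification of the correct test vector. The choice $\mathbf{e}_i-\mathbf{e}_j$ is special because its Rayleigh quotient has a very clean closed form and the sum of the resulting pairwise inequalities collapses exactly, since $\sum_{\{i,j\}}(t(v_i)+t(v_j))=(n-1)\sum_k t(v_k)=2(n-1)W(G)$. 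This tight summation identity is what upgrades the single global bound $\partial_n^{\mathcal{L}}\leq\frac{n}{n-1}$ into sharp pairwise distance-transmission equalities, which are what ultimately pin down $G=K_n$.
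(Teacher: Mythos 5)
Your proof is correct, and the converse direction takes a genuinely different route from the paper's. The paper first uses the trace identity to show that $\partial_n^{\mathcal{L}}=\frac{n}{n-1}$ forces the eigenvalue $\frac{n}{n-1}$ to have multiplicity $n-1$, so that \emph{every} vector orthogonal to $T^{1/2}\mathbf{1}$ is an eigenvector; it then writes down explicit such vectors supported on two coordinates and reads off the eigenvector equation to conclude that a maximum-transmission vertex is at distance $1$ from everything. You instead stay entirely within the variational characterization: plugging the harmonic vectors $\mathbf{e}_i-\mathbf{e}_j$ into the Rayleigh quotient of Corollary~\ref{min max} (your closed form $1+\frac{2d(v_i,v_j)}{t(v_i)+t(v_j)}$ is correct under the paper's unordered-pair convention for $\sum_{i\neq j}$) gives the pairwise bounds $t(v_i)+t(v_j)\geq 2(n-1)d(v_i,v_j)$, and the observation that these inequalities sum to an identity ($(n-1)\sum_k t(v_k)=2(n-1)W(G)$ on both sides) upgrades them to equalities, after which the edge case pins down universal vertices. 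Your argument has the advantage of never needing the multiplicity/eigenspace step at all --- it shows directly that $\partial_n^{\mathcal{L}}=\frac{n}{n-1}$ alone forces $G=K_n$ using only test vectors, with the trace argument relegated to the easy reduction from the $\partial_2^{\mathcal{L}}$ hypothesis --- whereas the paper's proof trades the averaging step for an exact eigenvector computation. Both are complete and correct.
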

\begin{proof}
It is straightfoward to check that the eigenvalues of the complete graph satisfy $\partial_2^\mathcal{L} = \cdots = \partial_n^\mathcal{L} = \frac{n}{n-1}$. Assume that $G$ is a graph with $\partial_n^\mathcal{L} = \frac{n}{n-1}$. Then since $\partial_1^\mathcal{L} =0$, we have that  
\[
n = \mathrm{trace}(\mathcal{D}^\mathcal{L}) \leq \sum_{i=2}^n \partial_i^\mathcal{L} \leq (n-1)\partial_n^{\mathcal{L}}.
\]
Therefore, we must have that $\partial_2^\mathcal{L} = \frac{n}{n-1}$ and so the eigenspace corresponding to eigenvalue $\frac{n}{n-1}$ has dimension $n-1$. To prove the conjecture we give a basis for this eigenspace and use the sum of squares characterization to show that the basis implies that all pairs of vertices must be at distance $1$. Since the dimension of the eigenspace is $n-1$, any vector $\mathbf{x}$ which is perpendicular to $T^{1/2}\mathbf{1}$ will give Rayleigh quotient
\[
\frac{\mathbf{x}^T\mathcal{D}^\mathcal{L}\mathbf{x}}{\mathbf{x}^T\mathbf{x}} = \frac{n}{n-1}
\]
Now we can construct linearly independent vectors such that the subspace they create is perpendicular to the vector $T^{1/2}\mathbf{1}$. Without loss of generality, assume that the vertex indexing the first row and column of the matrix has the largest transmission over all vertices (if there is a tie, choose arbitrarily). For $2\leq j\leq n$, define $\mathbf{x}_j$
\begin{align*}
    \mathbf{x}_i= \begin{cases} 
      \sqrt{t(v_j)} & i=1 \\
      -\sqrt{t(v_1)} & i=j \\
      0 & \text{otherwise} 
   \end{cases}
\end{align*}
This vector is perpendicular to $T^{1/2}\mathbf{1}$, and hence it must be an eigenvector for eigenvalue $\frac{n}{n-1}$. Then, in general, the $j^\text{th}$ row of  $\mathcal{D}^\mathcal{L}\mathbf{x}=\partial^\mathcal{L}\mathbf{x}$ will yield
\begin{align*}
    -\frac{d(v_1,v_j)}{\sqrt{t(v_1)t(v_j)}}\sqrt{t(v_j)}-\sqrt{t(v_1)}&=-\partial^\mathcal{L}\sqrt{t(v_1)}.
\end{align*}
Simplifying, we get
\begin{align*}
    \frac{d(v_1,v_j)}{t(v_1)}+1&=\partial^\mathcal{L}.
\end{align*}
This implies that
\begin{align*}
    \frac{d(v_1,v_j)}{t(v_1)}&=\frac{1}{n-1}\text{ for all $j\neq1$.}
\end{align*}

Since $\partial^\mathcal{L}$ is the same for all $n-1$ eigenvectors perpendicular to $T^{1/2}\mathbf{1}$, then the quotient on the left hand side of this equation must be the same for all vertices, including the vertices adjacent to vertex $v_1$, implying that $t(v_1) = n-1$. Then, since this quantity is the same for all $k$ and $t(v_1) = n-1$ for all $j$, we have that $d(v_1,v_j)=1$ for all $j\neq1$. Since $v_1$ has maximum transmission, this implies that for all $j$, $v_j$ has transmission at most $n-1$, so the transmission is exactly $n-1$ for all $j$. Therefore, $G$ is the complete graph.
\end{proof}

\section{Partial Results Towards Conjecture \ref{conj2}}\label{optimization section}

Let $G$ be an $n$-vertex graph with largest normalized distance Laplacian spectral radius $\partial_n^\mathcal{L}$ over all connected $n$ vertex graphs. In this section, we make progress towards Conjecture \ref{conj2} by showing that there exist absolute constants $c_1, c_2, c_3 >0$ satisfying
\[
2 - c_1 \frac{1}{\sqrt{n}}\leq \partial_n^\mathcal{L} \leq 2 - c_2\frac{1}{n},
\]
and that the diameter of $G$ is at least $c_3\sqrt{n}$. We then show that under some natural conditions, we have $\partial_n^\mathcal{L} = 2 - \Theta\left( \frac{1}{\sqrt{n}}\right)$ and the diameter of $G$ is $\Theta(\sqrt{n})$. 
\subsection{Optimization Problems}
In this subsection, we assume that $G$ is an $n$-vertex graph with maximum spectral radius over all graphs on $n$ vertices. We use Corollary {\ref{min problem}} to give partial results towards determining the structure of $G$. First, we define $P$ and $N$ as the sets of vertices with positive and negative harmonic eigenvector entries, respectively. Consider any shortest path of length $\mathrm{diam}(G)$, and define $P'\subset P$ and $N'\subset N$ as the sets of vertices on this shortest path with positive and negative eigenvector entries, respectively. So we have that $|P'|+|N'|=\diam(G)+1$. We are considering the path and the harmonic eigenvector as fixed so that $P$, $N$, $P'$, and $N'$ are fixed sets of indices that are defined by $\mathbf{y}$ and the path. Any harmonic eigenvector entries of $0$ are arbitrarily assigned to $P$ or $N$. From Corollary {\ref{min problem}}, we define the following optimization problem $\mathcal{P}_0$:
\[
    \min\OBJ_0(\mathbf{y}):=\frac{\sum_{i\neq j}d(v_i,v_j)(\mathbf{y}_i+\mathbf{y}_j)^2}{\sum_{i=1}^n \mathbf{y}_i^2t(v_i)}\text{ subject to } \mathbf{y}\neq\mathbf{0}.
\]
If $z_0$ is the minimum of this optimization, then Corollaries \ref{min max} and \ref{min problem} show that $\partial^\mathcal{L}_n = 2-z_0$, and hence we may study Conjecture \ref{conj2} by understanding this optimization. Next we define three more optimization problems which are easier for us to analyze.

First, define $\mathcal{P}_1$, for which we ignore distances:
\[
    \min\OBJ_1(\mathbf{y}):=\frac{\sum_{i\neq j}(\mathbf{y}_i+\mathbf{y}_j)^2}{n\diam(G)\sum_{i=1}^n \mathbf{y}_i^2}
    \text{ subject to } \mathbf{y}\neq\mathbf{0}.
\]
Next, we define optimization $\mathcal{P}_2$ where we ignore some of the terms in the numerator (terms which are close to $0$ in the conjectured extremal example):
\[
    \min\OBJ_2(\mathbf{y}):=\frac{\sum_{v_i,v_j\in P}d(v_i,v_j)(\mathbf{y}_i+\mathbf{y}_j)^2+\sum_{v_i,v_j\in N}d(v_i,v_j)(\mathbf{y}_i+\mathbf{y}_j)^2}{\sum_{i=1}^n\mathbf{y}_i^2t(v_i)}
\]
\[
    \text{subject to } \mathbf{y}_i\geq 0\text{ for all } v_i\in P \text{ and } \mathbf{y}_j\leq 0\text{ for all } v_j\in N, \text{ and } \mathbf{y} \not= \mathbf{0}.
\]
Finally, we define $\mathcal{P}_3$:
\[
    \min\OBJ_3(\mathbf{y}):=\frac{\sum_{i\in P}\frac{|P'|^2}{8}\mathbf{y}_i^2+\sum_{i\in N}\frac{|N'|^2}{8}\mathbf{y}_i^2}{\sum_{i=1}^n\mathbf{y}_i^2t(v_i)}.
\]
\[
    \text{subject to } \mathbf{y}_i\geq 0\text{ for all } v_i\in P \text{ and } \mathbf{y}_j\leq 0\text{ for all } v_j\in N. \text{ and } \mathbf{y} \not= \mathbf{0}.
\]
\begin{lemma}\label{optimization order lemma}
If $z_i$ is a global minimum to $\mathcal{P}_i$, then for $i\geq 1$, $z_0\geq z_i$.
\end{lemma}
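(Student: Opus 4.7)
The plan is to prove the three inequalities $z_0 \geq z_1$, $z_0 \geq z_2$, and $z_0 \geq z_3$ separately. The common thread is to take $\mathbf{y}^{*}$, a minimizer of $\mathcal{P}_0$, and observe that by the very definitions of $P$, $N$, $P'$, $N'$, this vector is feasible for $\mathcal{P}_2$ and $\mathcal{P}_3$; then it suffices to show $\OBJ_i(\mathbf{y}^{*}) \leq \OBJ_0(\mathbf{y}^{*}) = z_0$. For $\mathcal{P}_1$, which carries no sign constraint, I would instead establish the pointwise inequality $\OBJ_0(\mathbf{y}) \geq \OBJ_1(\mathbf{y})$ for every nonzero $\mathbf{y}$.

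The case $i=1$ is the simplest: $d(v_i,v_j) \geq 1$ for $i \neq j$ bounds the numerator of $\OBJ_0$ from below by the numerator of $\OBJ_1$, while $t(v_i) \leq (n-1)\diam(G) \leq n\,\diam(G)$ bounds its denominator from above by the denominator of $\OBJ_1$. The case $i=2$ is only a little more work: the numerator of $\OBJ_2$ is obtained from that of $\OBJ_0$ by deleting the nonnegative contributions $d(v_i,v_j)(\mathbf{y}_i+\mathbf{y}_j)^2$ coming from cross-sign pairs $v_i \in P$, $v_j \in N$, while the denominator is unchanged, so $\OBJ_2(\mathbf{y}^{*}) \leq \OBJ_0(\mathbf{y}^{*}) = z_0$.

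The case $i=3$ is where the real work lies. Using $(\mathbf{y}_i + \mathbf{y}_j)^2 \geq \mathbf{y}_i^2$ for same-sign pairs (valid since both entries share a sign), I would bound
\[
\sum_{v_i,v_j \in P,\, i \neq j} d(v_i,v_j)(\mathbf{y}_i+\mathbf{y}_j)^2 \;\geq\; \sum_{v_i \in P} \mathbf{y}_i^2 \sum_{v_j \in P',\, v_j \neq v_i} d(v_i,v_j),
\]
and to extract the coefficient $|P'|^2/8$ I would invoke the following geometric fact along the fixed diameter-realizing geodesic. If $w_a, w_b \in P'$ occupy positions $p_a < p_b$ on the path, then $d(w_a,w_b) = p_b - p_a$ because the path is a geodesic, so the triangle inequality yields $d(v,w_a) + d(v,w_b) \geq p_b - p_a$ for every graph vertex $v$. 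Summing over all unordered pairs in $P'$ gives
\[
(|P'| - 1) \sum_{w \in P'} d(v,w) \;\geq\; \sum_{1 \leq a < b \leq |P'|} (p_b - p_a) \;\geq\; \binom{|P'|+1}{3},
\]
whence $\sum_{w \in P'} d(v,w) \geq |P'|(|P'|+1)/6 \geq |P'|^2/8$ as soon as $|P'| \geq 2$. Applying this (and the analogous bound for $N'$) to the inner sums above, and noting that discarding cross-sign pairs only decreases the $\OBJ_0$ numerator, produces $\OBJ_3(\mathbf{y}^{*}) \leq \OBJ_0(\mathbf{y}^{*}) = z_0$.

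The main obstacle is the triangle-inequality averaging step in the $i=3$ case, together with the corner cases $|P'| \leq 1$ or $|N'| \leq 1$, in which the bound $|P'|(|P'|+1)/6$ no longer exceeds $|P'|^2/8$ because the averaging degenerates. These small cases will need to be handled separately, either by a direct calculation (when $|P'| \leq 1$ the extra term $|P'|^2/8 \sum_{v_i \in P} \mathbf{y}_i^2$ is a bounded constant multiple of $\sum \mathbf{y}_i^2$ and can be absorbed by cross-sign contributions that were discarded too generously), or by appealing to the fact that for extremal graphs $\diam(G)+1 = |P'|+|N'|$ is large, so one of $|P'|, |N'|$ can be assumed $\geq 2$ without loss of generality.
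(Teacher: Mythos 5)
Your proposal is correct and follows essentially the same route as the paper: the pointwise comparison $\OBJ_1(\mathbf{y})\leq\OBJ_0(\mathbf{y})$ via $d(v_i,v_j)\geq 1$ and $t(v_i)\leq n\diam(G)$, the feasibility of the $\mathcal{P}_0$-minimizer for $\mathcal{P}_2,\mathcal{P}_3$ (since $P$ and $N$ are defined by its sign pattern), discarding the cross-sign terms, and reducing to the bound $\sum_{j\in P'}d(v_i,v_j)\geq |P'|^2/8$. The one place you genuinely diverge is in justifying that last bound: the paper asserts that the worst case occurs when $v_i$ sits in the middle of a consecutive block of $P'$ on the geodesic and sums the resulting distances, whereas you derive it from the triangle inequality by summing $d(v,w_a)+d(v,w_b)\geq p_b-p_a$ over pairs and dividing by $|P'|-1$; your version is more careful, since it does not require arguing that an arbitrary vertex of $P$ (possibly off the path) behaves no better than one on the path. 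Note that both arguments degenerate when $|P'|\leq 1$ (or $|N'|\leq 1$): the paper's claimed inequality reads $0\geq 1/8$ there and is passed over silently, while you flag the case explicitly — so your proposal is, if anything, slightly more complete than the printed proof on this point.
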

\begin{proof}
To begin, we note that for all $\mathcal{P}_i$ as defined above, if $\mathbf{y}$ is in the feasible region of any given optimization problems, then $\mathbf{y}$ is in the feasible region of $\mathcal{P}_0$. First, we look at $\mathcal{P}_1$. We note that in any graph, between any two vertices $v_i$ and $v_j$, $d(v_i,v_j)\geq 1$. Therefore, comparing the numerators of $\OBJ_0$ and $\OBJ_1$, it follows that 
\[\sum_{i\neq j} d(v_i,v_j)(\mathbf{y}_i+\mathbf{y}_j)^2\geq \sum_{i\neq j}(\mathbf{y}_i+\mathbf{y}_j)^2.\]

Then, comparing the denominators of $\OBJ_0$ and $\OBJ_1$, since $t(v_i)\leq n\diam(G)$ for all $v_i$, $\sum_{i=1}^n\mathbf{y}_i^2 t(v_i)\leq n\diam(G)\sum_{i=1}^n\mathbf{y}_i^2$. Thus for any $\mathbf{y}$, we have that $\OBJ_1(\mathbf{y}) \leq \OBJ_0(\mathbf{y})$ and hence the global minima satisfy $z_1\leq z_0$.

Considering the numerator in $\mathcal{P}_2$, we only consider pairs with either both vertices in $P$ or both in $N$. Since we are throwing away nonnegative terms and the denominator remaining the same from $\mathcal{P}_0$, we have that $\OBJ_2(\mathbf{y}) \leq \OBJ_0(\mathbf{y})$ and since the feasible region of $\mathcal{P}_2$ is smaller than that of $\mathcal{P}_0$, we have that $z_2\leq z_0$.

To analyze $\mathcal{P}_3$ we start from $\mathcal{P}_2$. In the numerator of $\OBJ_2(\mathbf{y})$, we have that for each term $(\mathbf{y_i}+\mathbf{y}_j)^2 \geq \mathbf{y}_i^2 + \mathbf{y}_j^2$, as we have thrown away all terms where $\mathbf{y}_i$ and $\mathbf{y}_j$ have different sign. Therefore
\begin{align*}
    \OBJ_2(\mathbf{y})&=\frac{\sum_{v_i,v_j\in P}d(v_i,v_j)(\mathbf{y}_i+\mathbf{y}_j)^2+\sum_{v_i,v_j\in N}d(v_i,v_j)(\mathbf{y}_i+\mathbf{y}_j)^2}{\sum_i^n\mathbf{y}^2t(v_i)}\\
    &\geq\frac{\sum_{v_i\in P}\sum_{v_j\in P}d(v_i,v_j)\mathbf{y}_i^2+\sum_{v_i\in N}\sum_{v_j\in N}d(v_i,v_j)\mathbf{y}_i^2}{\sum_i^n \mathbf{y}_i^2 t(v_i)}
\end{align*}
Now, in the numerator of the second line, each term $\mathbf{y}_i^2$ appears exactly
\[
\sum_{j\in P}d(v_i, v_j) \geq \sum_{j\in P'}d(v_i, v_j) 
\]
times if $i\in P$ and exactly 
\[
\sum_{j\in N}d(v_i, v_j) \geq \sum_{j\in N'}d(v_i, v_j)
\]
times if $i\in N$. For each $i\in P$, since the vertices in $P'$ are on a shortest path, the smallest that $\sum_{j\in P'}d(v_i, v_j)$ can be is if $v_i$ is in the middle of the vertices of $P'$ which are all consecutive on the path. That is,
\[
\sum_{j\in P'}d(v_i, v_j) \geq \sum_{d=1}^{\lfloor (|P'|-1)/2\rfloor} d + \sum_{d=1}^{\lceil (|P'|-1)/2\rceil}d \geq \frac{|P'|^2}{8}. 
\]
Similarly, for all $i\in N$ we have that $\sum_{j\in N'}d(v_i, v_j) \geq \frac{|N'|^2}{8}$. Therefore,
\begin{align*}
\frac{\sum_{v_i\in P}\sum_{v_j\in P}d(v_i,v_j)\mathbf{y}_i^2+\sum_{v_i\in N}\sum_{v_j\in N}d(v_i,v_j)\mathbf{y}_i^2}{\sum_i^n \mathbf{y}_i^2 t(v_i)}\geq \\
 \frac{\sum_{v_i\in P}\frac{|P'|^2}{8}\mathbf{y}_i^2+\sum_{v_i\in N}\frac{|N'|^2}{8}\mathbf{y}_i^2}{\sum_i^n\mathbf{y}_i^2t(v_i)} = \OBJ_3(\mathbf{y}).
\end{align*}
Since the feasible regions of $\mathcal{P}_2$ and $\mathcal{P}_3$ are the same, we have that $z_3\leq z_2\leq z_0$ and $z_1\leq z_0$.
\end{proof}

\subsection{Extremal Graph}
In this subsection, we show a lower bound for $\partial_n^\mathcal{L}$, as well as an absolute upper bound, with conditions that would imply that the upper bound and lower bound are of the same order of magnitude away from $2$. These results come from a determination of upper and lower bounds found for the diameter of the extremal graph $G$.

First, we will find a lower bound for $\partial_n^\mathcal{L}$. Reinhart conjectures that the maximal spectral radius will be achieved by a barbell graph $KPK_{n_1,n_2,n_3}$ with complete cliques of sizes $n_1$ and $n_3$ connected by a path of length $n_2$. We therefore begin by analyzing a specific barbell graph to give our lower bound. Note that since we are only trying to determine the order of magnitude, we ignore floors and ceilings.
\begin{proposition}
\label{barbell example}
For a barbell graph with $\frac{n-\sqrt{2n}}{2}$ vertices in each of the two complete cliques and $\sqrt{2n}$ vertices along the path, then $\partial^\mathcal{L}_n\geq 2-\frac{c}{\sqrt{n}}$ for some absolute constant $c$.
\end{proposition}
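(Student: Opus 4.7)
The plan is to apply Corollary~\ref{min problem}, which implies that for any nonzero $\mathbf{y}$,
\[
\partial_n^{\mathcal{L}} \;\geq\; 2 - \frac{\sum_{i\neq j} d(v_i,v_j)(\mathbf{y}_i+\mathbf{y}_j)^2}{\sum_{i=1}^n \mathbf{y}_i^2\, t(v_i)},
\]
and then to exhibit a test vector making this ratio $O(1/\sqrt{n})$. Write $L$ and $R$ for the two cliques (each of size $n_1 = (n-\sqrt{2n})/2$) and $P$ for the $\sqrt{2n}$ path vertices joining them, so $\ell := \diam(G) = \Theta(\sqrt{n})$ and every $v \in L$, $u \in R$ are at distance $\ell$. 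I would take the sign vector $\mathbf{y}_i = +1$ on $L$, $\mathbf{y}_i = -1$ on $R$, and $\mathbf{y}_i = 0$ on $P$. The whole point of this choice is that every inter-clique pair contributes $0$ to the numerator since $\mathbf{y}_i+\mathbf{y}_j = 0$; otherwise these pairs would dominate, contributing on the order of $n_1 n_3 \cdot \ell = \Theta(n^{5/2})$, which matches the denominator and would prevent any bound better than $\Theta(1)$.

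For the denominator, any $v \in L$ has distances within $L$ summing to $\Theta(n)$, distances to $P$ summing to $\sum_{k=1}^{\ell} k = \Theta(n)$, and distances to $R$ summing to $n_3 \cdot \ell = \Theta(n^{3/2})$; the last term dominates, so $t(v) = \Theta(n^{3/2})$. Since $\mathbf{y}_i^2 = 1$ on the $\Theta(n)$ clique vertices and is $0$ elsewhere, the denominator $\sum_i \mathbf{y}_i^2 t(v_i)$ is $\Theta(n^{5/2})$.

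For the numerator I would split pairs into four types. Intra-clique pairs contribute $4\bigl(\binom{n_1}{2}+\binom{n_3}{2}\bigr) = \Theta(n^2)$, each having distance $1$ and $(\mathbf{y}_i+\mathbf{y}_j)^2 = 4$. Inter-clique pairs contribute $0$ by the choice of signs, and intra-$P$ pairs contribute $0$ since $\mathbf{y}$ vanishes on $P$. For clique--path pairs we have $(\mathbf{y}_i+\mathbf{y}_j)^2 = 1$, and for each clique vertex $v$ the distance sum $\sum_{u \in P} d(v,u) = \Theta(\ell^2) = \Theta(n)$, so these pairs together contribute $\Theta(n)\cdot\Theta(n) = \Theta(n^2)$. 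The numerator is therefore $\Theta(n^2)$, the ratio is $\Theta(n^2/n^{5/2}) = \Theta(1/\sqrt{n})$, and we obtain $\partial_n^\mathcal{L} \geq 2 - c/\sqrt{n}$ for an absolute constant $c$. The calculation is entirely elementary; the only care needed is in tracking constants in the transmission and distance sums (and, as the statement allows, ignoring floors and ceilings), so I do not anticipate any real obstacle.
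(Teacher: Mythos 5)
Your proposal is correct and is essentially identical to the paper's proof: the same test vector ($+1$ on one clique, $-1$ on the other, $0$ on the path), the same observation that inter-clique pairs vanish, and the same $\Theta(n^2)/\Theta(n^{5/2})$ estimate of the Rayleigh quotient via Corollary~\ref{min problem}.
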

\begin{proof}
Let $G$ be the barbell graph with cliques $K_1$ and $K_2$, each with size $k=\frac{n-\sqrt{2n}}{2}$ and path with size $p=\sqrt{2n}$. For vertices $v_i\in K_1,K_2$, $v_i$ has distance $1$ to vertices in its own clique and $p$ to vertices in the other clique. So in total such a vertex has transmission

\[t(v_i)=\sum_{j\in K_1}d(v_i,v_j)+\sum_{j\notin K_1}d(v_i,v_j) = \Omega\left(n^{3/2} \right).\] 
Then, from Proposition {\ref{sum of squares}}, choosing the vector such that $\mathbf{y}_i=1$ for all $v_i\in K_1$, $\mathbf{y}_j=-1$ for all $v_j\in K_2$, and $\mathbf{y}=0$ along the path, we get
\begin{align*}
    \sum_{i\neq j}d(v_i,v_j)(\mathbf{y}_i+\mathbf{y}_j)^2&=1\cdot(2^2)\cdot\frac{k(k-1)}{2}\cdot 2+2k\cdot\left(\sum_{d=1}^p d\right) = O\left( n^2\right)
\end{align*}
Therefore,
\[
\partial_n^\mathcal{L}=2-\frac{\sum_{i\neq j}d(v_i,v_j)(\mathbf{y}_i+\mathbf{y}_j)^2}{\sum_{i=1}^n\mathbf{y}_i^2 t(v_i)}\geq2- cn^{-1/2},
\]
for some absolute constant $c$. A more careful calculation shows that one can take $c = 2^{3/2}+o(1)$.
\end{proof}

The next goal is to find an upper bound for $\partial_n^\mathcal{L}$. Before doing so we need the following technical lemma.

\begin{lemma}\label{vector lower bound}
    Let $\mathbf{z}$ be a unit vector in $\mathbb{R}^n$. Then 
\[
\sum_{i\not=j} (\mathbf{z}_i + \mathbf{z}_j)^2 = \Omega(n).
\]
\end{lemma}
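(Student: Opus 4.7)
The plan is to expand the squared sum directly and exploit the unit norm condition $\sum_i \mathbf{z}_i^2 = 1$. Writing
\[
\sum_{i\neq j}(\mathbf{z}_i+\mathbf{z}_j)^2 = \sum_{i\neq j}\mathbf{z}_i^2 + \sum_{i\neq j}\mathbf{z}_j^2 + 2\sum_{i\neq j}\mathbf{z}_i\mathbf{z}_j,
\]
each of the first two sums is $(n-1)\sum_i \mathbf{z}_i^2 = n-1$, since for each fixed index the other variable ranges over $n-1$ values. For the cross term, I would use the familiar identity $\sum_{i\neq j}\mathbf{z}_i\mathbf{z}_j = (\sum_i \mathbf{z}_i)^2 - \sum_i \mathbf{z}_i^2 = (\sum_i \mathbf{z}_i)^2 - 1$.

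Collecting terms gives
\[
\sum_{i\neq j}(\mathbf{z}_i+\mathbf{z}_j)^2 = 2(n-1) + 2\Bigl(\sum_i \mathbf{z}_i\Bigr)^2 - 2 = 2n - 4 + 2\Bigl(\sum_i \mathbf{z}_i\Bigr)^2.
\]
Since the squared sum is nonnegative, this is at least $2n-4$, which is $\Omega(n)$, proving the claim.

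There is essentially no obstacle here; the lemma reduces to one line of algebra. The only thing worth noting is that the bound is tight, attained (up to the lower order term) when $\sum_i \mathbf{z}_i = 0$, so one cannot hope for a better-than-linear lower bound. This is consistent with how the lemma will be used later, where the harmonic eigenvector is known to satisfy a weighted orthogonality $\mathbf{y}\perp T\mathbf{1}$ rather than plain $\sum_i \mathbf{y}_i = 0$; the point of the lemma is that no orthogonality hypothesis whatsoever is needed for the $\Omega(n)$ lower bound.
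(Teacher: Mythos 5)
Your proof is correct, and it is genuinely different from (and considerably shorter than) the paper's argument. The identity you derive,
\[
\sum_{i\neq j}(\mathbf{z}_i+\mathbf{z}_j)^2 \;=\; 2(n-2) + 2\Bigl(\sum_i \mathbf{z}_i\Bigr)^2 \;\geq\; 2n-4,
\]
checks out (over ordered pairs; halve everything for unordered pairs, which still gives $\Omega(n)$), and it is all that is needed where the lemma is applied in Theorem \ref{upper bound using diameter}. The paper instead splits the indices into the positive set $P$ and negative set $N$, uses $(\mathbf{z}_i+\mathbf{z}_j)^2 \geq \mathbf{z}_i^2+\mathbf{z}_j^2$ for same-sign pairs, and then runs an $\epsilon$-based case analysis (introducing the auxiliary sets $N_L$ and $P_S$) to handle the case where almost all of the $\ell_2$ mass sits on a set of $o(n)$ coordinates; this yields a constant around $1/16$. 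Your expansion buys a one-line proof, the sharp constant, and an exact characterization of near-tightness (mass split evenly between $\pm$ with $\sum_i\mathbf{z}_i\approx 0$), whereas the paper's sign-decomposition has the incidental virtue of running parallel to the $P/N$ machinery used for $\OBJ_2$ and $\OBJ_3$ elsewhere in the section and of surviving in weighted settings where the cross terms do not telescope. For the lemma as stated, your argument is complete and preferable.
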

\begin{proof}
    Let $\epsilon >0$ be a small positive constant that will be chosen to be small enough later. As with the eigenvector, we define $P$ and $N$ to be the sets of indices where the vector is positive and negative respectively. That is
    \[
    P = \{i: \mathbf{z}_i \geq 0\} \quad \quad N = \{i: \mathbf{z}_i<0\}.
    \]
    Note that 
    \begin{equation}\label{vector lower bound equation}
\sum_{i\not=j} (\mathbf{z}_i + \mathbf{z}_j)^2 \geq \sum_{i,j\in P}(\mathbf{z}_i + \mathbf{z}_j)^2 + \sum_{i,j \in N} (\mathbf{z}_i + \mathbf{z}_j)^2 \geq (|P|-1)\sum_{i\in P}\mathbf{z}_i^2 + (|N|-1)\sum_{i\in N} \mathbf{z}_i^2.
    \end{equation}
    If $|P|, |N| \geq \epsilon n$, then \eqref{vector lower bound equation} completes the proof. So without loss of generality, assume that $|P| < \epsilon n$ and $|N| \geq (1-\epsilon)n$. With this assumption, if 
    \[
    \sum_{i\in N} \mathbf{z}_i^2 \geq \epsilon,
    \]
    then \eqref{vector lower bound equation} again completes the proof, so we may also assume that 
    \[
    \sum_{i\in P}\mathbf{z}_i^2 \geq 1-\epsilon.
    \]
    Define $N_L = \{i: \mathbf{z}_i < - \sqrt{2\epsilon/n}\}$. Then $|N_L| \leq n/2$ and so $|N\setminus N_L| > (1/2-\epsilon)n > \frac{n}{4}$. Next define $P_S =  \{i: 0\leq\mathbf{z}_i \leq 2\sqrt{2\epsilon/n}\}$. Then 
    \[
    \sum_{i\in P_S} \mathbf{z}_i^2 \leq  |P| (2\sqrt{2\epsilon/n})^2 \leq 8\epsilon^2 < \epsilon,
    \]
    for small enough choice $\epsilon$. This implies that 
    \[
\sum_{i\in P\setminus P_S} \mathbf{z}_i^2 \geq 1-2\epsilon.
    \]
    Now by the definitions of $N_L$ and $P_S$ we have that for any $i\in P\setminus P_S$ and $j\in N\setminus N_L$ we have $\mathbf{z}_i + \mathbf{z}_j \geq \frac{1}{2}\mathbf{z}_i$. Hence
    \[
    \sum_{i\not = j}(\mathbf{z}_i + \mathbf{z}_j)^2 \geq \sum_{\substack{i\in P\setminus P_S\\j\in N\setminus N_L}}(\mathbf{z}_i + \mathbf{z}_j)^2 \geq |N\setminus N_L| \sum_{i\in P\setminus P_S} \frac{\mathbf{z}_i^2}{4} \geq \frac{n}{16}(1-2\epsilon),
    \]
    completing the proof for appropriately chosen $\epsilon$.
\end{proof}

\begin{theorem}
\label{upper bound using diameter}
For the extremal graph $G$, $\partial_n^\mathcal{L}\leq 2-\frac{c}{\diam(G)}$ for some $c>0$.
\end{theorem}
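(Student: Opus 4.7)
The plan is to convert the upper bound on $\partial_n^\mathcal{L}$ into a lower bound on the value of the auxiliary optimization problem $\mathcal{P}_1$, and then apply Lemma \ref{vector lower bound} as a black box. By Corollary \ref{min problem} we have $\partial_n^\mathcal{L} = 2 - z_0$, where $z_0 = \min_{\mathbf{y}\neq\mathbf{0}} \OBJ_0(\mathbf{y})$, so the desired inequality $\partial_n^\mathcal{L} \leq 2 - c/\diam(G)$ is equivalent to $z_0 \geq c/\diam(G)$ for some absolute constant $c > 0$.

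By Lemma \ref{optimization order lemma}, $z_0 \geq z_1 := \min_{\mathbf{y}\neq\mathbf{0}} \OBJ_1(\mathbf{y})$, so it suffices to lower bound $\OBJ_1(\mathbf{y})$ for every nonzero $\mathbf{y}$. Since $\OBJ_1$ is scale-invariant, I can normalize so that $\sum_i \mathbf{y}_i^2 = 1$. Applying Lemma \ref{vector lower bound} to the unit vector $\mathbf{y}$ gives an absolute constant $c' > 0$ for which $\sum_{i\neq j}(\mathbf{y}_i+\mathbf{y}_j)^2 \geq c' n$. Substituting,
\[
\OBJ_1(\mathbf{y}) \;=\; \frac{\sum_{i\neq j}(\mathbf{y}_i+\mathbf{y}_j)^2}{n\,\diam(G)\,\sum_i \mathbf{y}_i^2} \;\geq\; \frac{c' n}{n\,\diam(G)\cdot 1} \;=\; \frac{c'}{\diam(G)},
\]
which is the required lower bound on $z_1$, and hence on $z_0$. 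Taking $c = c'$ finishes the argument.

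The main technical content has already been packaged inside Lemma \ref{vector lower bound}; once that lemma is available, the present theorem is essentially a two-line calculation riding on top of the comparison chain of Lemma \ref{optimization order lemma}, so I do not expect a serious obstacle in this step itself. I would only note in passing that the extremal hypothesis on $G$ is not actually used anywhere in the argument — the same inequality $\partial_n^\mathcal{L} \leq 2 - c/\diam(G)$ holds for every connected $n$-vertex graph — but of course that is more than enough to conclude for the extremal $G$.
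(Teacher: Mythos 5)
Your proof is correct and follows the same route as the paper: reduce to $\mathcal{P}_1$ via Corollary \ref{min problem} and Lemma \ref{optimization order lemma}, normalize $\mathbf{y}$ to a unit vector, and apply Lemma \ref{vector lower bound} to get $\OBJ_1(\mathbf{y}) \geq c/\diam(G)$. Your side remark that extremality of $G$ is never used is also accurate — the bound holds for every connected graph.
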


\begin{proof}
  Consider $\OBJ_1(\mathbf{y})$. Since the function is scale-invariant we may assume without loss of generality that $\mathbf{y}$ is a unit vector. By Lemma \ref{vector lower bound}, we have that 
  \[
  \OBJ_1(\mathbf{y}) = \frac{\sum_{i\not=j}(\mathbf{y}_i + \mathbf{y}_j)^2}{n\diam(G)} \geq \frac{c}{\diam(G)},
  \]
  for some $c>0$. By Corollary \ref{min problem} and Lemma \ref{optimization order lemma}, we have the result. 
  \end{proof}

\begin{corollary}
There exists an  absolute constant $c>0$ such that for any graph $G$, $\partial^\mathcal{L}_n\leq 2-\frac{c}{n}$.
\end{corollary}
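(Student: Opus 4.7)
The plan is to combine Theorem \ref{upper bound using diameter} with the trivial observation that the diameter of any connected graph on $n$ vertices is at most $n-1$. Let $G$ be any $n$-vertex graph, and let $G^*$ be the extremal $n$-vertex graph achieving the maximum value of $\partial_n^\mathcal{L}$. Then by definition $\partial_n^\mathcal{L}(G)\leq \partial_n^\mathcal{L}(G^*)$, so it suffices to bound the latter.

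Since $G^*$ is connected on $n$ vertices, we have $\diam(G^*)\leq n-1$. Applying Theorem \ref{upper bound using diameter} to $G^*$ yields
\[
\partial_n^\mathcal{L}(G^*) \leq 2 - \frac{c}{\diam(G^*)} \leq 2 - \frac{c}{n-1} \leq 2 - \frac{c}{n},
\]
after absorbing the constant (or taking a slightly smaller $c$). Chaining the inequalities gives the claim for an arbitrary $G$.

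The proof is essentially a one-line consequence of the previous theorem, so there is no real obstacle. The only subtlety worth flagging is that Theorem \ref{upper bound using diameter} is stated for the extremal graph, so one must either route through the extremal comparison as above, or observe that its proof (which only uses Lemma \ref{vector lower bound} applied to $\OBJ_1$) actually holds verbatim for any connected graph. Either framing delivers the corollary immediately.
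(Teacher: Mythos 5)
Your proof is correct and matches the paper's own argument: both deduce the corollary from Theorem \ref{upper bound using diameter} together with the trivial bound $\diam(G)\leq n$. Your extra remark about routing through the extremal graph (or noting the theorem's proof applies to any connected graph) is a careful touch that the paper glosses over, but the substance is identical.
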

\begin{proof}
Since for any graph $G$, $\diam(G)\leq n$, from Theorem {\ref{upper bound using diameter}}, it follows that $\partial_n^\mathcal{L}\leq2-\frac{c}{n}$.
\end{proof}

\begin{corollary}
    For the extremal graph, $\diam(G)\geq c\sqrt{n}$ for some $c>0$.
\end{corollary}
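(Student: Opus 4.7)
The plan is to combine the lower bound on $\partial_n^\mathcal{L}$ provided by the barbell construction with the upper bound from Theorem \ref{upper bound using diameter}. Since the extremal graph $G$ attains the maximum spectral radius over all $n$-vertex connected graphs, its value $\partial_n^\mathcal{L}$ must be at least as large as the value attained by the particular barbell $KPK_{k,p,k}$ analyzed in Proposition \ref{barbell example}. That proposition gives
\[
\partial_n^\mathcal{L}(G) \;\geq\; 2 - \frac{c_1}{\sqrt{n}}
\]
for some absolute constant $c_1 > 0$.

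On the other hand, Theorem \ref{upper bound using diameter} yields an absolute upper bound
\[
\partial_n^\mathcal{L}(G) \;\leq\; 2 - \frac{c_2}{\diam(G)}
\]
for some absolute constant $c_2 > 0$, valid precisely for the extremal graph.

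Chaining these two inequalities gives $\dfrac{c_2}{\diam(G)} \leq \dfrac{c_1}{\sqrt{n}}$, which rearranges to $\diam(G) \geq (c_2/c_1)\sqrt{n}$. Setting $c = c_2/c_1$ completes the proof. No obstacle is expected here since both ingredients are already established above; the only thing to be careful about is that Proposition \ref{barbell example} must be applicable for all sufficiently large $n$ (so that $k$ and $p$ are positive integers), which is automatic, and the conclusion should be stated for $n$ large enough with the constant adjusted to cover small cases if desired.
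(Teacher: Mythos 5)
Your proof is correct and is essentially identical to the paper's: both chain the lower bound from Proposition \ref{barbell example} (which applies to the extremal graph by maximality) with the upper bound $\partial_n^\mathcal{L}\leq 2-\frac{c_2}{\diam(G)}$ from Theorem \ref{upper bound using diameter} and rearrange. Your added remark about integrality of the barbell parameters for large $n$ is a reasonable (if minor) point of care that the paper handles by ignoring floors and ceilings.
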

\begin{proof}
    From the results in Proposition {\ref{barbell example}} and Theorem {\ref{upper bound using diameter}}, since $G$ is extremal it follows that
    \[
2 - \frac{c_1}{\sqrt{n}} \leq \partial^\mathcal{L}_n \leq 2 - \frac{c_2}{\diam(G)}.
    \] 
     Rearranging, we get $\diam(G)\geq c\sqrt{n}$ for some $c>0$.
\end{proof}

Next, we propose several natural conditions for which if any one is met, then we can upper bound the diameter of the extremal graph $G$, and thus upper bound $\partial_n^\mathcal{L}$. We define the positive transmission of a vertex $v_i$ to be the sum of distances to all vertices $v_j\in P$. Likewise, we define the negative transmission of a vertex $v_i$ to be the sum of distances to all vertices $v_j\in N$. In other words, $t_P(v_i)=\sum_{j\in P}d(v_i,v_j)$ and $t_N(v_i)=\sum_{j\in N}d(v_i,v_j)$.

\begin{theorem}\label{natural conditions}
If any one of the following is true, then for the extremal graph $G$, $\diam(G)\leq c\sqrt{n}$ for some $c>0$ and $\partial_n^\mathcal{L}=2-\Theta\left(\frac{c}{\sqrt{n}}\right)$.
\begin{enumerate}
    \item $\sum_{v_i\in P}\mathbf{y}_i^2,\sum_{v_i\in N}\mathbf{y}_i^2\geq \epsilon\sum_{i}\mathbf{y}_i^2$
    \item $|P'|,|N'|\geq \epsilon \diam(G)$ 
    \item $t_P(v_i),t_N(v_i)\geq \epsilon t(v_i)$ for all $v_i$ and for $\epsilon\geq\frac{c}{\sqrt{n}}$
\end{enumerate}
\end{theorem}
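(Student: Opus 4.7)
The plan is to run each hypothesis through Lemma~\ref{optimization order lemma}, choosing whichever of the relaxations $\mathcal{P}_2$ or $\mathcal{P}_3$ is best adapted to the assumption. Two external inputs are in constant use: the extremality bound $z_0 = 2-\partial_n^\mathcal{L} \leq c_1/\sqrt{n}$, which follows from Proposition~\ref{barbell example} together with extremality of $G$; and Theorem~\ref{upper bound using diameter} in the form $2-\partial_n^\mathcal{L} \geq c/\diam(G)$, which upgrades any bound $\diam(G)=O(\sqrt{n})$ to $\partial_n^\mathcal{L} \leq 2 - c'/\sqrt{n}$ and hence, when paired with the barbell lower bound, yields $\partial_n^\mathcal{L} = 2-\Theta(1/\sqrt{n})$.

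Under condition (1) I would work with $\OBJ_3$. Since $|P'|+|N'|=\diam(G)+1$, without loss of generality $|P'|\geq \diam(G)/2$, so $|P'|^2/8 \geq \diam(G)^2/32$. Dropping the nonnegative $N$-term in the numerator of $\OBJ_3$, invoking the weight hypothesis $\sum_{i\in P}\mathbf{y}_i^2 \geq \epsilon\sum_i\mathbf{y}_i^2$, and using $t(v_i)\leq n\,\diam(G)$ in the denominator gives
\[
\OBJ_3(\mathbf{y}) \;\geq\; \frac{(|P'|^2/8)\sum_{i\in P}\mathbf{y}_i^2}{n\,\diam(G)\sum_i\mathbf{y}_i^2} \;\geq\; \frac{\epsilon\,\diam(G)}{32\,n}.
\]
Matching this against $z_0 \leq c_1/\sqrt{n}$ forces $\diam(G) = O(\sqrt{n}/\epsilon)$. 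Under condition (2) an essentially identical computation, with $|P'|,|N'|\geq \epsilon\,\diam(G)$ substituted directly into the numerator of $\OBJ_3$, produces $\OBJ_3(\mathbf{y}) \geq \epsilon^2\,\diam(G)/(8n)$, again forcing $\diam(G) = O(\sqrt{n})$. In both cases the closing step is Theorem~\ref{upper bound using diameter} plus the barbell.

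For condition (3) I would instead use $\mathcal{P}_2$. The sign restrictions give $(\mathbf{y}_i+\mathbf{y}_j)^2 \geq \mathbf{y}_i^2+\mathbf{y}_j^2$ on each surviving pair, and collecting terms by fixing an index yields
\[
\OBJ_2(\mathbf{y}) \;\geq\; \frac{\sum_{i\in P}\mathbf{y}_i^2\,t_P(v_i) + \sum_{i\in N}\mathbf{y}_i^2\,t_N(v_i)}{\sum_i\mathbf{y}_i^2\,t(v_i)} \;\geq\; \epsilon \;\geq\; \frac{c}{\sqrt{n}},
\]
where the middle inequality uses the per-vertex hypothesis $t_P(v_i),t_N(v_i)\geq \epsilon\,t(v_i)$ exactly once. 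This immediately delivers $z_0 \geq c/\sqrt{n}$ and, via the barbell, the conclusion $\partial_n^\mathcal{L} = 2-\Theta(1/\sqrt{n})$. For the diameter bound, the plan is to show that the per-vertex transmission hypothesis forces a quantitative version of either condition (1) or condition (2) on the extremal harmonic eigenvector, and then to invoke the $\mathcal{P}_3$ argument above.

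The principal obstacle is the diameter bound under condition (3): the $\OBJ_2 \geq \epsilon$ estimate captures the right order of magnitude for $\partial_n^\mathcal{L}$ but does not itself bring in $\diam(G)$. Translating ``$t_P(v_i),t_N(v_i)$ are a constant fraction of $t(v_i)$ at every vertex'' into either ``$|P'|,|N'|$ are a constant fraction of $\diam(G)$'' or ``the extremal $\mathbf{y}$ distributes comparable $\ell^2$-mass across $P$ and $N$'' is the delicate step, and I expect the cleanest route to be a counting argument along a shortest diameter-realizing path, using the transmission lower bounds to control how much $P$- and $N$-mass can concentrate away from that path.
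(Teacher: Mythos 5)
Your proposal matches the paper's proof essentially line for line: Conditions 1 and 2 are handled through $\OBJ_3$ with the same $\epsilon\diam(G)/(32n)$ and $\Theta(\epsilon^2\diam(G)/n)$ lower bounds played off against the barbell upper bound $z_0 \leq c_1/\sqrt{n}$ (then upgraded via Theorem \ref{upper bound using diameter}), and Condition 3 is handled through the same chain $\OBJ_2(\mathbf{y}) \geq \sum_i \mathbf{y}_i^2(t_P(v_i)\text{ or }t_N(v_i))/\sum_i \mathbf{y}_i^2 t(v_i) \geq \epsilon$. The obstacle you flag --- that the $\OBJ_2\geq\epsilon$ estimate delivers only the spectral-radius conclusion and not $\diam(G)\leq c\sqrt{n}$ under Condition 3 --- is genuine, and the paper's own proof does not resolve it either: it stops at $\partial_n^\mathcal{L}\leq 2-\epsilon$ for that case, so the diameter claim there is unproved in the published argument as well, and your proposed reduction to Conditions 1 or 2 would be an actual addition rather than a reconstruction.
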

\begin{proof}
    Consider $\OBJ_3(\mathbf{y})$.\\
    Assume that Condition 1 holds: 
    \begin{align*}
        \frac{|P'|^2\sum_{v_i\in P}\mathbf{y}_i^2+|N'|^2\sum_{v_i\in N}\mathbf{y}_i^2}{8\sum_i\mathbf{y}_i^2t(v_i)}&\geq\frac{\epsilon |P'|^2\sum_i\mathbf{y}_i^2+\epsilon |N'|^2\sum_i\mathbf{y}_i^2}{8n\diam(G)\sum_i\mathbf{y}_i^2}\\
        &\geq\frac{\epsilon \diam(G)^2}{32n\diam(G)}\\
        &=\frac{\epsilon \diam(G)}{32n},
    \end{align*}
    where the second inequality is because at least one of $|P'|$ and $|N'|$ is at least $\diam(G)/2$. From Proposition {\ref{barbell example}}, $\frac{\sum_{i\neq j}d(v_i,v_j)(\mathbf{y}_i+\mathbf{y}_j)^2}{\sum_i\mathbf{y}_i^2t(v_i)}\leq\frac{c}{\sqrt{n}}$. Therefore, $\frac{\epsilon \diam(G)}{32n}\leq\frac{c}{\sqrt{n}}$ so $\diam(G)\leq\frac{32c\sqrt{n}}{\epsilon}$\\
    Next assume Condition 2 holds: 
    \begin{align*}
        \frac{|P'|^2\sum_{v_i\in P}\mathbf{y}_i^2+|N'|^2\sum_{v_i\in N}\mathbf{y}_i^2}{8\sum_i\mathbf{y}_i^2t(v_i)}&\geq\frac{2\epsilon^2 \diam(G)^2\sum_i \mathbf{y}_i^2}{8\sum_i\mathbf{y}_i^2t(v_i)}\\
        &\geq\frac{2\epsilon^2 \diam(G)^2\sum_i \mathbf{y}_i^2}{8n\diam(G)\sum_i\mathbf{y}_i^2}\\
        &=\frac{\epsilon^2\diam(G)}{4n}
    \end{align*}
    Similarly to before, by Proposition {\ref{barbell example}}, we have $\frac{\epsilon^2 \diam(G)}{4n}\leq\frac{c}{\sqrt{n}}$ so $\diam(G)\leq\frac{4c\sqrt{n}}{\epsilon^2}$.\\
 Consider $\OBJ_2(\mathbf{y})$ and assume that Condition 3 holds:
    \begin{align*}
    \OBJ_2(\mathbf{y})&=\frac{\sum_{v_i,v_j\in P}d(v_i,v_j)(\mathbf{y}_i+\mathbf{y}_j)^2+\sum_{v_i,v_j\in N}d(v_i,v_j)(\mathbf{y}_i+\mathbf{y}_j)^2}{\sum_i^n\mathbf{y}^2t(v_i)}\\
    &\geq\frac{\sum_{v_i\in P}\sum_{v_j\in P}d(v_i,v_j)\mathbf{y}_i^2+\sum_{v_i\in N}\sum_{v_j\in N}d(v_i,v_j)\mathbf{y}_i^2}{\sum_i^n \mathbf{y}_i^2 t(v_i)}\\
    &=\frac{\sum_{v_i\in P}t_P(v_i)\mathbf{y}_i^2+\sum_{v_i\in N}t_N(v_i)\mathbf{y}_i^2}{\sum_i \mathbf{y}_i^2 t(v_i)}\\
    &\geq\frac{\sum_{v_i\in P}\epsilon t(v_i)\mathbf{y}_i^2+\sum_{v_i\in N}\epsilon t(v_i)\mathbf{y}_i^2}{\sum_i \mathbf{y}_i^2 t(v_i)}\\
    &=\epsilon
\end{align*}
Thus $\partial_n^\mathcal{L}\leq2-\epsilon\leq2-\frac{c}{\sqrt{n}}$
\end{proof}

It remains open to prove Conjecture \ref{conj2}, but Theorem \ref{natural conditions} shows that under any one of three mild conditions on the eigenvector, the extremal graph will have properties similar to the conjectured extremal barbell graph. Showing that in the extremal graph $G$ one of these conditions is true is a natural next step. It seems quite difficult to prove Conjecture \ref{conj2} in full, and it would be interesting even to determine the order of magnitude of the minimum of $2-\partial_n^{\mathcal{L}}$. We end with one more open problem regarding this matrix: how small can $\partial_2^{\mathcal{L}}$ be over all connected $n$ vertex graphs? This is an analog of the same question for the normalized Laplacian, which was asked by Aldous and Fill \cite{AF} and solved in \cite{ACTT}.

\bibliographystyle{plain}
\bibliography{references}
\end{document}